\documentclass{article}
\usepackage[utf8]{inputenc}
\usepackage[margin=1in]{geometry}
\usepackage{amsmath} 
\usepackage{amsthm}
\usepackage{todonotes}
\usepackage[nolist]{acronym}
\usepackage{xcolor}
\usepackage{amsmath}
\usepackage{amssymb}
\usepackage{mathtools}
\usepackage{subcaption}
\usepackage{hyperref}
\hypersetup{colorlinks=true, linkcolor=black}
\usepackage[ruled]{algorithm2e}
\usepackage{mathrsfs}
\definecolor{ccolor}{RGB}{203,96,21}


 
\newcommand{\R}{\mathbb{R}}

\newcommand{\dc}{\mathcal{D}}


\DeclarePairedDelimiter{\norm}{\lVert}{\rVert}

\DeclarePairedDelimiterX{\inp}[2]{\langle}{\rangle}{#1, #2}



\newcommand{\bm}[1]{{\mathbf{#1}}}
\usepackage{algorithmicx}
\newtheorem{theorem}{Theorem}
\newtheorem{lemma}{Lemma}
\newtheorem{problem}{Problem}
\newtheorem{remark}{Remark}
\newtheorem{example}{Example}

\newtheorem{definition}{Definition}
\newcommand{\beq}{\begin{equation}}
\newcommand{\eeq}{\end{equation}}

\newcommand{\vt}[1]{{\bm{#1}}}
\newcommand{\mt}[1]{{\bm{#1}}}

\renewcommand\footnotemark{}

\title{\LARGE \bf Robust Data-Driven Safe Control \\ using Density Functions}

\author{Jian Zheng$^1$,  Tianyu Dai$^2$, Jared Miller$^1$, Mario Sznaier$^1$
\thanks{$^1$J. Zheng, J. Miller, and M. Sznaier are with the Robust Systems Lab,  ECE Department, Northeastern University, Boston, MA 02115. (e-mails: \{zheng.jian1, miller.jare\}@northeastern.edu, msznaier@coe.neu.edu)}
\thanks{$^2$T. Dai is with The MathWorks, Inc., 1 Apple Hill Drive,
		Natick, MA 01760 USA (e-mail: tdai@mathworks.com)}
\thanks{J. Zheng, J. Miller, and M. Sznaier were partially supported by NSF grants  CNS--1646121, ECCS--1808381 and CNS--2038493, AFOSR grant FA9550-19-1-0005, and ONR grant N00014-21-1-2431.  
}}

\begin{document}

\maketitle

\begin{abstract} \label{sec:abstract}
This paper presents a tractable framework for data-driven synthesis of robustly safe control laws. Given noisy experimental data and some priors about the structure of the system, the goal is to synthesize a state feedback law such that the trajectories of the closed loop system are guaranteed to avoid an unsafe set even in the presence of unknown but bounded disturbances (process noise).  The main result of the paper shows that for polynomial dynamics, this problem can be reduced to a tractable convex optimization by combining elements from polynomial optimization and the theorem of alternatives. This optimization provides both a rational control law and a density function safety certificate. These results are illustrated with numerical examples. 
\end{abstract}
\section{Introduction} \label{sec:introduction}

The goal of this paper is to develop a tractable framework for data-driven synthesis of safe control laws that are robust to $L_\infty$-bounded noise  in both data-collection and during execution. Specifically,  given noisy experimental data generated by an unknown system and some priors about its structure, the objective is to synthesize a state feedback law such that the trajectories of the closed loop system starting in a given initial condition set $\mathcal{X}_0$ are guaranteed to avoid an unsafe set $\mathcal{X}_u$, even in the presence of unknown but bounded disturbances. 
Our main result shows that, for polynomial dynamics, the safe \ac{DDC} problem can be posed as the feasibility of a \ac{SOS} program. A substantial reduction in the number of variables involved (and hence computational complexity) is achieved by exploiting the theorem of alternatives, leading to a \ac{SDP} that provides both a density-function based control law and a robust safety certificate. 

Safety verification and synthesis of safe control laws have been the subject of intense research during the past decade. Level-set methods separate the initial and unsafe set by the $0$-contour of a solved function.
Barrier functions \cite{prajna2004safety} are a level-set method to certify the safety of trajectories, given that the superlevel sets of the barrier function are invariant. This superlevel invariance can be relaxed through slack (class-$\mathcal{K}$) conditions, while ensuring that the $0$-level set is invariant \cite{ames2019control, xiao2019control}. The level-set certificate of stability may be solved jointly with a safety-guaranteeing control policy $u(\cdot)$ (\ac{CBF}). When a barrier function is given, the min-norm controller will ensure safety of trajectories, and can be found through quadratic programming \cite{ames2014control}. Robustness of given barrier functions to disturbances may be analyzed using input-to-state stability \cite{XU2015robustcbf}.
Barrier functions and funnels \cite{majumdar2013control} contain bilinearities when jointly synthesizing controllers and barriers.
An alternative level-set certificate is  Density \cite{rantzer2004analysis} functions, which are based on Dual Lyapunov methods for stability \cite{rantzer2001dual}. Controllers and density functions can be simultaneously solved in a convex manner. In some systems, density functions may exist and provide improved performance as compared to  barrier functions  \cite{chen2020densityvalue}.

We briefly compare against other methods of safety-constrained control.
Interval analyses, such as Mixed Monotonicity \cite{coogan2020mixed}, offer real-time performance at the expense of conservatism in safe generation.
Hamilton-Jacobi reachability \cite{bansal2017hamilton} performs forward and backward reachable set analysis based on level sets of a differential games' value function, whose computation could require solving PDEs or neural net approximations.
Reinforcement Learning necessitates training and prior information of safety properties (e.g. Lipschitz bounds on dynamics), and does not generally exploit physical principles and model structure \cite{brunke2022safe}. Koopman methods leverage the predictive capabilities of nonlinear models, 
but they contain error bounds that can conflict against safety certification \cite{folkestad2020data}.





\ac{DDC} is a methodology that synthesizes control laws directly from acquired system observations and skips a system-identification/robust-synthesis pipeline \cite{formentin2014comparison}. Amongst the vast literature in \ac{DDC}, the closest approaches related to the present paper are those that pursue a set membership approach, which seeks to find a controller that stabilizes the set of all plants compatible with the observed data (the consistency set) \cite{dai2018moments,dai2020semi,waarde2020noisy,martin2021data,berberich2021robustmpc, bisoffi2022data, miller2022lpvqmi, miller2022eiv_short}. These approaches provide a controller together with a stability certificate, usually in the form of a common Lyapunov function.  Further, the methods can be extended to provide worst case performance bounds (e.g. the $H_2,H_\infty$ or $L_\infty$ sense), over the set of data-consistent plants.  However, these approaches cannot handle safety constraints beyond those expressed in terms of these norms.

Recent work on \ac{DDC} under safety constraints includes \cite{rosolia2018learning, lopez2021robust, dacs2022robust}. 
The method in \cite{rosolia2018learning} performs iterative model predictive control for a discrete-time system by constraining state trajectories to always lie in a sampled safe set (using integer programming). 
The work in \cite{lopez2021robust} uses contraction methods to form robust adaptive \acp{CBF} under a set membership approach, but assumes that the input relation $g(\cdot)$ is known. The approach in \cite{dacs2022robust} uses a disturbance observer to provide robust \acp{CBF} by separating known and unknown dynamics. In our setting, we assume only prior knowledge of the system model (polynomial up to a specified degree) and cannot generally provide this separation. 
Our work involves continuous-time dynamics and interpretable (density) certificates of robust safety.
To the best of our knowledge, our approach is the first \ac{DDC} method under safety constraints that simultaneously considers data-collection and online-dynamics noise.


Contributions of this work are,
\begin{itemize}
    \item A \ac{DDC} framework for density-based robust safe control.
    \item Tractable synthesis of robustly safe density functions by exploiting the theorem of alternatives.
    \item Numerical examples demonstrating robustly safe control on polynomial systems.
\end{itemize}

This paper has the following structure: 
Section \ref{sec:preliminaries} reviews preliminaries such as notation, density functions for safety, 
and \ac{SOS} polynomials.
Section \ref{sec:safe_no_w} performs data-driven synthesis of safe controllers using density functions and \ac{SOS} methods in the case where $L_\infty$-bounded noise  occurs at data collection and  the dynamics are subject to unknown but bounded disturbances. 
Section \ref{sec:examples} demonstrates the effectiveness of our approach  on several  example systems. Section \ref{sec:conclusion} concludes the paper.

\section{Preliminaries} \label{sec:preliminaries}
\begin{acronym}
\acro{CBF}{Control Barrier Function}

\acro{DDC}{Data Driven Control}



\acro{LMI}{Linear Matrix Inequality}
\acroplural{LMI}[LMIs]{Linear Matrix Inequalities}
\acroindefinite{LMI}{an}{a}

\acro{LP}{Linear Program}
\acroindefinite{LP}{an}{a}






\acro{SDP}{Semidefinite Program}
\acroindefinite{SDP}{an}{a}

\acro{SOS}{Sum of Squares}
\acroindefinite{SOS}{an}{a}


\end{acronym}

\subsection{Notation} \label{subsec:notation}

\begin{tabular}{p{0.13\columnwidth}p{0.75 \columnwidth}}
$\R^n$ & Set of $n$-tuples of real numbers\\
$x,\vt{x},\mt{X}$ & Scalar, vector, matrix\\
$\mathbf{1,0},\mt{I}$ & Vector/matrix of all 1s, 0s, identity matrix\\
$\left\|\vt{x} \right\|_\infty$ & $L_\infty$-norm of vector $\vt{x}$\\
$\mt{X}\succeq 0$ & $\mt{X}$ is positive semi-definite \\
$\otimes$ & Kronecker product\\
\text{vec}$(\mt{X})$ & Vectorized matrix along columns: \\ & $\text{vec}(\mt{X})= \left[ \mt{X}(\colon,1)^T,\ldots,\mt{X}(\colon,n)^T\right]^T$ \\
$\rho \in C^d$ & $\rho$ has a continuous $d^{th}$ derivative\\
$\nabla \rho$ & Gradient of scalar function $\rho$ \\
$\nabla \cdot f$ & Divergence of vector function $f$\\
\end{tabular}

\subsection{Sum-of-Squares}

We briefly review the concept of \ac{SOS} polynomials and proofs of nonnegativity \cite{parrilo2000structured}. A polynomial $p \in \R[\vt{x}]$ is \ac{SOS} (and hence nonnegative) if there exist polynomials $\{q_\ell \in \R[\vt{x}]\}_{\ell= 1}^L$ such that $p(\vt{x}) = \sum_{\ell=1}^L q_\ell(\vt{x})^2$.

The cone of \ac{SOS} polynomials is $\Sigma[\vt{x}]$, and its up to degree $2d$ restriction is $\Sigma_{d}[\vt{x}]$. The cone $\Sigma_d[\vt{x}]$ is semidefinite representable as $p(\vt{x}) = \vt{v}(\vt{x})^T \mt{Q} \vt{v}(\vt{x})$ where $\vt{v}(\vt{x})$ is the monomial vector up to degree $d$ and $\mt{Q} \succeq 0$ is the Gram matrix. A sufficient condition for a polynomial $p$ to be nonnegative over the semialgebraic region $\{\vt{x} \mid h_i(\vt{x}) \geq 0, \ i = 1..N_c\}$ is that $p$ is contained in the quadratic module formed by $h_i$ (there exists $\sigma_{0..N_c} \in \Sigma[\vt{x}]$ such that $p(\vt{x}) = \sigma_0 + \sum_{i=1}^{N_c}\sigma_i h_i$).


\subsection{Level-Set-Based Safety Certification}
Consider a continuous-time  system of the form
\begin{equation} \label{eq:dynamics}
    \dot{\vt{x}} = f(\vt{x},\vt{w})
\end{equation}
where $\vt{x}\in \R^n$ is the state and $\vt{w} \in \mathcal{W} \subseteq \R^n$ is a disturbance. Further, assume that $\vt{w}(t)$ is such that the trajectories of \eqref{eq:dynamics} are well defined for any initial condition $\vt{x}_0 \in \mathcal{X}_0$. In the sequel, we will denote these trajectories as $\vt{x}(t,\vt{w},\vt{x}_0)$.
\begin{definition}
Given an initial condition set $\mathcal{X}_0 \subseteq \R^n$ and an unsafe set $\mathcal{X}_u \subseteq \R^n$, system \eqref{eq:dynamics} is robustly safe if, for all $t$, all initial conditions $\vt{x}_0 \in \mathcal{X}_0$ and all $\vt{w}(t) \in \mathcal{W}$,  $\vt{x}(t,\vt{w},\vt{x}_0) \not \in\mathcal{X}_u$.
 \end{definition}
Typically, safety is certified through the use of barrier functions, defined as:
\begin{definition}
A differentiable function $B(\vt{x}): \R^n \to \R$ is a robust barrier function for \eqref{eq:dynamics} with respect to $\mathcal{X}_0$ and $\mathcal{X}_u$ if 
\begin{align}
B(\vt{x}) & \leq 0,\; \forall \vt{x} \in \mathcal{X}_0, \; 
B(\vt{x})  >0, \; \forall \vt{x} \in\mathcal{X}_u \label{eq:B2} \\
\frac{\partial B}{\partial \vt{x}}f(\vt{x},\vt{w}) & < 0, \; \forall \vt{w} \in \mathcal{W} \quad \text{whenever $B(\vt{x})=0$.} \label{eq:B3}
\end{align}
\end{definition} 
As shown for instance in \cite{prajna2004safety}, existence of a barrier function is a sufficient condition to certify safety. Note however that the conditions above are non-convex, even when $\vt{w} \equiv  0$, due to the constraint \eqref{eq:B3}. For instance, in the case of polynomial dynamics and semialgebraic $\mathcal{X}_0$ and $\mathcal{X}_u$, if $B(\vt{x})$ is also polynomial,  this constraint can be enforced by introducing a polynomial multiplier $h(\vt{x})$ and imposing that
\begin{equation}\label{eq:CBF}
-\frac{\partial B}{\partial \vt{x}}f(\vt{x},\vt{w}) + h(\vt{x})B(\vt{x}) \in \Sigma[\vt{x}]. \end{equation}
The condition above cannot be written as a single semi-definite optimization due to the multiplication of the coefficients of the two unknown polynomials, $h$ and $B$. Possible relaxations include choosing a fixed multiplier $h$, or simply dropping the $B(\vt{x})=0$ quantifier \cite{ames2019control}.
An alternative, convex approach based on the use of densities was proposed in \cite{rantzer2004analysis}.
\begin{theorem}[\cite{rantzer2004analysis}] \label{thm:safety}
Given $\mathcal{X}_0$ and $\mathcal{X}_u$, system \eqref{eq:dynamics} is safe if there exists a scalar function $\rho(\vt{x})\in C^1$ such that
\begin{subequations} \label{eq:safe_rho}
\begin{align}
\nabla \cdot [\rho(\vt{x}) f (\vt{x})]&> 0, \ \forall \vt{x}\in \R^n \label{eq:den} \\
\rho (\vt{x}) & \geq 0, \ \forall \vt{x}\in \mathcal{X}_0, \;
\rho (\vt{x}) < 0, \ \forall \vt{x}\in \mathcal{X}_u.
\end{align}
\end{subequations}
\end{theorem}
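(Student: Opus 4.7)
The plan is to prove the statement by contradiction via a transport argument based on the Liouville identity for the flow of $f$. Throughout, I work with the disturbance-free dynamics $\dot{\vt{x}}=f(\vt{x})$ that actually appear in \eqref{eq:den}, and let $\phi_t(\cdot)$ denote the associated flow map.

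Suppose, for contradiction, that the system is unsafe: there exist $\vt{x}_0\in\mathcal{X}_0$ and $T>0$ with $\phi_T(\vt{x}_0)\in\mathcal{X}_u$, so in particular $\rho(\phi_T(\vt{x}_0))<0$. Since $\rho$ is continuous and the flow depends continuously on initial conditions, I can choose a small open neighborhood $Z\subseteq\mathcal{X}_0$ of $\vt{x}_0$ whose image $\phi_T(Z)$ is entirely contained in the open set $\{\vt{x}:\rho(\vt{x})<0\}$. Because $\phi_T$ is a $C^1$ diffeomorphism on its domain, $Z$ and $\phi_T(Z)$ both have strictly positive Lebesgue measure.

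Next, apply the transport identity for a $C^1$ density carried by a $C^1$ vector field: for every $t\in[0,T]$,
\begin{equation}
\frac{d}{dt}\int_{\phi_t(Z)}\rho(\vt{x})\,d\vt{x} \;=\; \int_{\phi_t(Z)}\nabla\cdot[\rho(\vt{x})f(\vt{x})]\,d\vt{x}.
\end{equation}
By \eqref{eq:den}, the integrand on the right is strictly positive pointwise, and the domain $\phi_t(Z)$ has positive measure for every $t$, so the map $t\mapsto\int_{\phi_t(Z)}\rho\,d\vt{x}$ is strictly increasing on $[0,T]$. At $t=0$ we have $\int_Z\rho\,d\vt{x}\geq 0$ since $Z\subseteq\mathcal{X}_0$ and $\rho\geq 0$ there; at $t=T$ we have $\int_{\phi_T(Z)}\rho\,d\vt{x}<0$ by construction of $Z$. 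The strict monotone increase from a nonnegative value to a strictly negative value is the desired contradiction.

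The main obstacle is the measure-theoretic side of the argument: one must ensure that the integrals in the Liouville identity are well defined (e.g.\ by localizing to a bounded region containing the relevant portion of the trajectory tube, since $\rho$ need not be globally integrable on $\R^n$), and one must justify that the positive-measure neighborhood $Z$ really can be transported into the strict sublevel set $\{\rho<0\}$. Both steps rely on the continuity of $\rho$, openness of the condition $\rho<0$, and the diffeomorphism property of $\phi_t$, which is how Rantzer handles them in \cite{rantzer2004analysis}. The sign-conditions on $\mathcal{X}_0$ and $\mathcal{X}_u$ do the rest of the work once the transport monotonicity is in hand.
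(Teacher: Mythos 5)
The paper itself gives no proof of Theorem \ref{thm:safety}; it is quoted from \cite{rantzer2004analysis}, and your Liouville/transport argument is precisely the argument used in that source, so in approach you match the reference. The transport identity $\frac{d}{dt}\int_{\phi_t(Z)}\rho\,d\vt{x}=\int_{\phi_t(Z)}\nabla\cdot[\rho f]\,d\vt{x}$, the strict monotonicity it yields under \eqref{eq:den}, and the sign comparison between $t=0$ and $t=T$ are all correct, as is your reading that the certificate concerns the disturbance-free vector field appearing in \eqref{eq:den}.

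The one step that does not hold as written is the selection of ``a small open neighborhood $Z\subseteq\mathcal{X}_0$ of $\vt{x}_0$.'' That requires $\vt{x}_0\in\operatorname{int}\mathcal{X}_0$, which is not among the hypotheses: $\mathcal{X}_0$ is a general (in this paper, closed semialgebraic) set, so the offending trajectory may start at a boundary point, or $\mathcal{X}_0$ may have empty interior; in either case no open set of positive Lebesgue measure fits inside $\mathcal{X}_0$, and the inequality $\int_Z\rho\,d\vt{x}\geq 0$ is not justified. The repair is to drop the requirement $Z\subseteq\mathcal{X}_0$ and use only $\rho(\vt{x}_0)\geq 0$ together with continuity: taking $Z$ to be the ball of radius $r$ about $\vt{x}_0$, the change of variables $\int_{\phi_T(Z)}\rho\,d\vt{x}=\int_Z\rho(\phi_T(\vt{z}))\,\lvert\det D\phi_T(\vt{z})\rvert\,d\vt{z}$ gives $\frac{1}{|Z|}\int_Z\rho\,d\vt{x}\to\rho(\vt{x}_0)\geq 0$ and $\frac{1}{|Z|}\int_{\phi_T(Z)}\rho\,d\vt{x}\to\lvert\det D\phi_T(\vt{x}_0)\rvert\,\rho(\phi_T(\vt{x}_0))<0$ as $r\to 0$, so for $r$ small enough $\int_{\phi_T(Z)}\rho\,d\vt{x}<\int_Z\rho\,d\vt{x}$, contradicting the strict increase you established. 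With that substitution (and your own remarks about localizing the integrals and keeping $Z$ inside the domain of the flow up to time $T$), the proof is complete.
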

The advantage of this approach is that it leads to a convex problem in $\rho$. On the other hand, imposing that the divergence condition holds everywhere can be unnecessarily conservative.  

The concepts above can be easily extended to the case where the goal is to synthesize a control action that keeps a system safe by introducing the concept of \acp{CBF}.
\begin{definition}  A function $B(\vt{x})$ is a \ac{CBF} for the system
  $  \dot{\vt{x}} = f(\vt{x},u,\vt{w})$
if there exists a control law $u(\vt{x})$ such that $B(\vt{x})$ is a barrier function for the closed loop dynamics $\dot{\vt{x}}=f(\vt{x},u(\vt{x}),\vt{w})$.
\end{definition}
In principle, a \ac{CBF} and associated control law can be found by modifying \eqref{eq:CBF} to
\beq \label{eq:h_relaxation} -\frac{\partial B}{\partial \vt{x}}f(\vt{x},u(\vt{x}),\vt{w}) + h(\vt{x})B(\vt{x}) \in \Sigma[\vt{x}]. \eeq
Problem \eqref{eq:h_relaxation} is bilinear in the coefficients of $B, u$ even when restricted to polynomial dynamics and control laws and a fixed multiplier $h$, necessitating the use of relaxations.
On the other hand, as shown in \cite{rantzer2004analysis}, the density based formulation can be easily modified to lead to  problems that are jointly convex in $\rho$ and $ \psi \doteq \rho u$.

\section{Data-Driven Safe Control} \label{sec:safe_no_w}

\subsection{Problem Statement}


The goal of this paper is to design a safe control law based on (noisy) experimental measurements for unknown polynomial systems where only minimal a-priori information  is available. Specifically, we consider control affine nonlinear  systems of the form
\begin{equation}\label{eq:dynamics_noise}
\dot{\vt{x}}(t) = f(\vt{x}) + g(\vt{x})u(t) + \vt{w}(t),
\end{equation}
where $u\in\R$ is the control and the input $\vt{w}$ satisfying $\forall t \geq 0 \colon \vt{w} \in \mathcal{W}$ represents an unknown random but bounded disturbance.  The only information available about the dynamics is that they can be expressed in terms of known dictionaries $\vt{\phi}(\vt{x}) \in \R^{d_f}, \vt{\gamma}(\vt{x}) \in \R^{d_g}$, that is
\beq\begin{aligned}\label{eq:pro1}
f(\vt{x}) = \mt{F} \vt{\phi}(\vt{x}); \; g(\vt{x})= \mt{G} \vt{\gamma}(\vt{x}) 
\end{aligned}\eeq
for some unknown  system parameter matrices $\mt{F} \in \mathbb{R}^{n\times d_f}$ and $\mt{G} \in \mathbb{R}^{n \times d_g}$.
In this context, the problem under consideration can be formally stated as:

\begin{problem} \label{pb:1} 
Given  a data-collection noise bound $\epsilon>0$, a process disturbance description $\vt{w} \in \mathcal{W}$ (e.g. $L_\infty$-bounded input), noisy derivative-state-input data $\dc = \{(\dot{\vt{x}}_s, \vt{x}_s, u_s)\}_{s=t_1..t_T}$ under the relation $\norm{\dot{\vt{x}}_s - f(\vt{x}_s) - g(\vt{x}_s)u_s}_\infty \leq \epsilon$, and basic semialgebraic sets  $\mathcal{X}_0$, $\mathcal{X}_u$, find a state-feedback control law $u(\vt{x})$  that renders all closed-loop  systems consistent with the observed data and priors   robustly safe with respect to $\mathcal{X}_0$ and $\mathcal{X}_u$, for all  $\vt{w} \in \mathcal{W}$.
\end{problem}





\subsection{Model Based Safety} \label{sec:MBS}

In order to solve Problem \ref{pb:1}, in this section we first develop a  convex condition, less conservative than \eqref{eq:safe_rho}, that guarantees robust controlled safety of a model of the form \eqref{eq:dynamics_noise} assuming that $f(.)$ and $g(.)$ are known.

\begin{lemma}\label{lem:lemma2} Assume that the set $\mathcal{X}_u$ has a description of the form:
\[ \mathcal{X}_u \doteq \left \{\vt{x} \colon h_i(\vt{x}) \geq 0,\ i=1..N_c \right \}. \]
Then, if there exist scalar functions $\rho(\vt{x}), \psi(\vt{x}) \in C^1$ such that: (i) $u(\vt{x}) \doteq \frac{\psi(\vt{x})}{\rho(\vt{x})}$ is well defined over the safe region $\rho(\vt{x}) \geq 0$, (ii) for all $\vt{w} \in \mathcal{W}$ and initial condition $\vt{x}_0 \in \mathcal{X}_0$, the trajectories of \eqref{eq:dynamics_noise} are well defined, and (iii)
the following conditions hold:
\begin{subequations}\label{eq:MBS}
\begin{align}
&\nabla \cdot [\rho(\vt{x}) \left(f (\vt{x}) +\vt{w}\right ) + \psi(\vt{x}) g(\vt{x})] - \rho(\vt{x})h(\vt{x}) > 0 \label{eq:MBS1} \\
& \text{$\forall \vt{x}\in \R^n$ and $\vt{w} \in \mathcal{W}$} \nonumber \\
&\rho (\vt{x}) \geq 0, \ \forall \vt{x}\in \mathcal{X}_0, \;
\rho (\vt{x})< 0, \ \forall \vt{x}\in \mathcal{X}_u. 
\end{align}
\end{subequations}
where $h\doteq \min_i \left \{ h_i(\vt{x}) \right \}$,
then the control law $u(\vt{x})$ renders the closed loop system robustly safe with respect to $\mathcal{X}_u$.
\end{lemma}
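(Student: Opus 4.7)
My plan is to run a Liouville-style transport argument along closed-loop trajectories, in the spirit of Rantzer's density safety theorem but with the multiplier $\rho(\vt{x})h(\vt{x})$ absorbing the unsafe region. First, I would substitute the state feedback $u(\vt{x}) = \psi(\vt{x})/\rho(\vt{x})$ into \eqref{eq:dynamics_noise} to obtain the closed-loop vector field $\tilde f(\vt{x},\vt{w}) \doteq f(\vt{x}) + g(\vt{x})u(\vt{x}) + \vt{w}$. Since $\rho u = \psi$ on the safe region, the bracketed expression in \eqref{eq:MBS1} equals $\rho\,\tilde f$, so \eqref{eq:MBS1} is equivalent to
\[
\nabla\cdot[\rho(\vt{x})\,\tilde f(\vt{x},\vt{w})] > \rho(\vt{x})\,h(\vt{x})\quad\forall\vt{x}\in\R^n,\;\vt{w}\in\mathcal{W}.
\]

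Next, I would fix an arbitrary admissible disturbance realization $\vt{w}(\cdot)$ and an arbitrary $\vt{x}_0\in\mathcal{X}_0$. Conditions (i) and (ii) guarantee that the (possibly non-autonomous) closed-loop flow $\Phi_t(\vt{x}_0)$ is well defined and that its Jacobian $J(t,\vt{x}_0) \doteq \det(\partial \Phi_t/\partial \vt{x}_0)$ satisfies $J(0,\vt{x}_0)=1$ and $J(t,\vt{x}_0)>0$ along the trajectory. Combining the standard identity $\dot J = (\nabla\cdot\tilde f)(\Phi_t,\vt{w}(t))\,J$ with the chain rule applied to $\xi(t) \doteq \rho(\Phi_t(\vt{x}_0))\,J(t,\vt{x}_0)$ yields
\[
\dot\xi(t) = \nabla\cdot[\rho\,\tilde f(\cdot,\vt{w}(t))]\bigl(\Phi_t(\vt{x}_0)\bigr)\,J(t,\vt{x}_0) > h(\Phi_t(\vt{x}_0))\,\xi(t),
\]
which converts the pointwise hypothesis into a scalar strict differential inequality along trajectories.

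Finally, I would use this inequality to rule out entry into $\mathcal{X}_u$. Since $\rho(\vt{x}_0)\ge 0$ on $\mathcal{X}_0$, $\xi(0)\ge 0$. At any hypothetical time $t^{\star}>0$ where $\xi(t^{\star})=0$, the strict inequality forces $\dot\xi(t^{\star}) > h(\Phi_{t^{\star}}(\vt{x}_0))\cdot 0 = 0$, so $\xi$ cannot cross from nonnegative values to negative ones; an analogous argument at $t=0$ handles the boundary case $\rho(\vt{x}_0)=0$. Hence $\xi(t)\ge 0$ for all $t\ge 0$, and because $J>0$, $\rho(\Phi_t(\vt{x}_0))\ge 0$ for every $t\ge 0$. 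The hypothesis $\rho<0$ on $\mathcal{X}_u$ then contrapositively excludes $\Phi_t(\vt{x}_0)\in\mathcal{X}_u$, and since $\vt{w}(\cdot)\in\mathcal{W}$ and $\vt{x}_0\in\mathcal{X}_0$ were arbitrary, robust safety follows.

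The step I expect to be the main obstacle is rigorously justifying the Liouville/Jacobian identity given that $u(\vt{x})=\psi(\vt{x})/\rho(\vt{x})$ generally has a singularity on $\{\rho=0\}$. The assumptions (i) that $u$ is well defined on $\{\rho\ge 0\}$ and (ii) that closed-loop trajectories are well defined for all $\vt{x}_0\in\mathcal{X}_0$ and $\vt{w}\in\mathcal{W}$ are exactly what is needed for $\Phi_t$ to be a $C^1$ diffeomorphism on the portion of the trajectory that matters, making the chain-rule/Jacobian computation and the resulting differential inequality legitimate; the remaining steps are elementary ODE comparison.
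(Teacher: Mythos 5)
Your core mechanism is the same as the paper's: expand the divergence in \eqref{eq:MBS1} as $\nabla\cdot[\rho\tilde f] = \nabla\rho\cdot\tilde f + \rho\,\nabla\cdot\tilde f$, observe that the second term and the slack $\rho h$ both vanish on the level set $\rho=0$, and conclude that $\dot\rho>0$ there, so no trajectory starting in $\{\rho\ge 0\}\supseteq\mathcal{X}_0$ can reach $\{\rho<0\}\supseteq\mathcal{X}_u$. The paper runs this argument directly on $t\mapsto\rho(\vt{x}(t))$ and closes with a first-crossing contradiction; you instead transport $\rho$ by the flow, introducing the Jacobian determinant $J$ and the Liouville identity $\dot J=(\nabla\cdot\tilde f)J$ to obtain $\dot\xi>h\xi$ for $\xi=\rho(\Phi_t(\vt{x}_0))\,J$. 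That detour is logically sound but buys nothing here: since $J>0$, $\xi$ and $\rho(\Phi_t(\vt{x}_0))$ have the same sign and the same zeros, and at a zero of $\rho$ one has $\dot\xi=\dot\rho\,J$, so your comparison inequality is exactly the paper's inequality multiplied by a positive factor.

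The one point where you over-claim is the assertion that conditions (i) and (ii) are ``exactly what is needed'' for the Liouville/Jacobian step. They are not: (ii) guarantees only existence of trajectories from $\mathcal{X}_0$, not differentiability of the flow with respect to initial conditions; $\tilde f$ need only be defined along the trajectory, whereas $u=\psi/\rho$ may be singular or undefined on open sets where $\rho<0$, which every neighborhood of a point on $\{\rho=0\}$ meets; and $\vt{w}(\cdot)$ is merely bounded, so $\Phi_t$ is at best a Carath\'eodory flow and $\dot J=(\nabla\cdot\tilde f)J$ requires separate justification. Fortunately none of this is load-bearing: drop $J$ entirely and apply your zero-crossing argument to $\rho(\vt{x}(t))$ itself, which needs only $\rho\in C^1$ and the chain rule along a single trajectory --- at which point you recover the paper's proof verbatim.
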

\begin{proof} Since by assumption $\rho,\psi \in C^1$ and $u$ is well defined, \eqref{eq:MBS1} is equivalent to (omit $\vt{x}$):
\beq 
\frac{\partial \rho}{\partial \vt{x}}(f+gu+\vt{w}) + \rho \left (\nabla \cdot (f+gu) -h \right ) > 0
\eeq
where we used the fact that $\psi = \rho u$. Hence, for all $\vt{w} \in \mathcal{W}$,
\[ \frac{d\rho}{dt}+ \rho\left (\nabla \cdot (f+gu) -h \right ) > 0\]
along the closed loop trajectories, which implies that $ \frac{d\rho}{dt} > 0$ when $\rho[\vt{x}(t)]=0$.  Assume that there exists  a trajectory  $\vt{x}(t,\vt{x}_0,\vt{w})$ that starts at $\vt{x}_0 \in \mathcal{X}_0$ and such that $\vt{x}(T,\vt{x}_0,\vt{w}) \in \mathcal{X}_u$. By continuity, there exists some $0<t_1<T$ and some $dt$ such that $\rho(t_1)=0$ and $\rho(t)<0$ for all $t\in [t_1,t_1+dt]$. However, this contradicts the fact that  $\frac{d\rho}{dt} \left |_{t=t_1} > 0 \right.$.
\end{proof}
\begin{remark} Since $\min_i\left \{ h_i(\vt{x}) \right \}$ has a semialgebraic representation, finding polynomial functions $\rho$ and $\psi$ reduces to 
\iac{SOS} optimization via standard 
arguments.
\end{remark}
\begin{remark}  Problem \eqref{eq:MBS} is an infinite-dimensional \ac{LP} in the values of $(\rho, \psi)$ at each $\vt{x}$, possessing both strict and non-strict inequality constraints. When compared against \eqref{eq:h_relaxation}, this formulation  has two advantages: (i) it avoids using an arbitrary, fixed multiplier $h(\vt{x})$, and (ii) it leads to jointly convex (in $\rho$ and $\psi$) optimization problems when searching for a control barrier and associated control action. On the other hand, \eqref{eq:MBS}, while retaining the desirable convexity properties of   \eqref{eq:safe_rho}, is less conservative: since the second term in \eqref{eq:MBS1} is nonnegative over the safe region, it does not require the first term to be positive everywhere, as is the case with  \eqref{eq:safe_rho}. Note that any feasible solution to  \eqref{eq:safe_rho} is also feasible for \eqref{eq:MBS}.
\end{remark}

\subsection{Safe Data Driven Control}\label{sec:SDDC}

This section presents the main result of the paper: a tractable, convex reformulation of Problem \ref{pb:1}.  In order to present these results, we begin by presenting  a tractable characterization of all systems that could have generated the observed data. 

Assume the sample data $\dc \doteq \left \{(\dot{\vt{x}}_s,\vt{x}_s,u_s)\right \}$ is corrupted by a sample (offline) noise bounded by $\epsilon$. The consistency set $\mathcal{C}$, 
which contains all systems that are consistent with the data, is defined as:
\begin{equation}
    \mathcal{C} \doteq \left\{
    f,g\colon \|\dot{\vt{x}}_s-f(\vt{x}_s)-g(\vt{x}_s)u_s\|_\infty \leq \epsilon, s=t_1..t_T \right\}.
\end{equation}
Recall that $f = \mt{F} \vt{\phi}$, $g = \mt{G} \vt{\gamma}$. Exploiting the following property of the Kronecker product \cite{petersen2008matrix} $$\text{vec}(\mt{P}^T\mt{X}\mt{Q}^T) = (\mt{Q}\otimes \mt{P}^T) \text{vec}(\mt{X}),$$ 
leads to the equivalent representation
\begin{equation} \label{eq:p1}
    \mathcal{C} = \left\{ 
    \vt{f},\vt{g} \colon 
    \begin{bmatrix}
        \mt{A} & \mt{B} \\ -\mt{A} & -\mt{B}
    \end{bmatrix} 
    \begin{bmatrix}
        \vt{f} \\ \vt{g}
    \end{bmatrix}
    \leq 
    \begin{bmatrix}
        \epsilon \mathbf{1} + \vt{\xi} \\ \epsilon \mathbf{1} - \vt{\xi}
    \end{bmatrix} \right\},
\end{equation}
where $\vt{f} = \text{vec} (\mt{F}^T)$, $\vt{g} = \text{vec} (\mt{G}^T)$ and
\begin{equation} \label{eq:abxi}
    \mt{A} \doteq \begin{bmatrix}
    \mt{I} \otimes \vt{\phi}^T(t_1) \\ \vdots \\ \mt{I} \otimes \vt{\phi}^T(t_T)
    \end{bmatrix},
    \mt{B} \doteq \begin{bmatrix}
    \mt{I} \otimes u\vt{\gamma}^T(t_1) \\ \vdots \\ \mt{I} \otimes u\vt{\gamma}^T(t_T)
    \end{bmatrix},
    \vt{\xi} \doteq 
    \begin{bmatrix}
    \dot{\vt{x}}(t_1) \\ \vdots \\ \dot{\vt{x}}(t_T)
    \end{bmatrix}.
\end{equation}

In order to establish robust safety, we need to add to this representation a description of all admissible disturbances. In the sequel, we will assume that this set  has a polytopic description of the form $\mathcal{W} \doteq \left \{ \vt{w} \colon \mt{W} \vt{w} \leq \vt{d}_\vt{w} \right \}$. Combining this description with the description of $\mathcal{C}$ leads to an augmented consistency set describing the set of all possible plants and disturbances:
\begin{equation}\label{eq:P1}
    \mathcal{P}_1 \doteq \left\{ \vt{f,g,w} \colon
    \begin{bmatrix}
        \mt{A} & \mt{B} & \mathbf{0}\\
        -\mt{A} & -\mt{B} & \mathbf{0}\\
        \mathbf{0} & \mathbf{0} & \mt{W}
    \end{bmatrix}
    \begin{bmatrix}
        \vt{f}\\ \vt{g} \\ \vt{w}
    \end{bmatrix}
    \leq 
    \begin{bmatrix}
        \epsilon \mathbf{1} + \vt{\xi}\\
        \epsilon \mathbf{1} - \vt{\xi}\\
        \vt{d}_\vt{w}\\
        \end{bmatrix} \right\}.
\end{equation}


It follows that a pair $(\rho,\psi)$ solves Problem \ref{pb:1} if 
\beq \label{eq:allfgw}
\nabla \cdot [\rho f (\vt{x}) + \psi g(\vt{x}) + \rho \vt{w}] - \rho(\vt{x})h(\vt{x})> 0 \eeq
holds for all $\vt{x}$ and all $(\vt{f},\vt{g},\vt{w}) \in \mathcal{P}_1$. In principle, this condition can be reduced to an \ac{SOS} optimization over the coefficients of $\rho,\psi$ by a straight application of Putinar's Positivstellensatz \cite{putinar1993compact}. However, this approach quickly becomes intractable. As we show next, computational complexity can be substantially reduced by exploiting duality. 

For a given pair $(\rho,\psi)$, consider the set of all systems of the form \eqref{eq:dynamics_noise} that are rendered safe by the control action $u=\frac{\psi}{\rho}$, along with the corresponding admissible perturbations, that is, the set of all
$(\vt{f},\vt{g},\vt{w})$ such that \eqref{eq:allfgw} holds for all $\vt{x} \in \R^n$. For each $\vt{x}$, this set is a polytope of the form:
\begin{equation}
    \mathcal{P}_2 \doteq \left\{\vt{f},\vt{g},\vt{w}\colon -
    \begin{bmatrix}
        (\nabla\cdot(\rho\vt{\phi}^T))^T \\  
        (\nabla\cdot(\psi\vt{\gamma}^T))^T\\
        (\nabla\rho)^T
    \end{bmatrix}^T
    \begin{bmatrix}
        \vt{f} \\ \vt{g} \\ \vt{w}
    \end{bmatrix} < -\rho h\right\}.
\end{equation}

It follows that \eqref{eq:allfgw} holds for all admissible disturbances $\vt{w} \in \mathcal{W}$ and all plants in the consistency $\mathcal{C}$ set if and only if $\mathcal{P}_1 \subseteq \mathcal{P}_2$.  This inclusion can be enforced through duality as follows:

\begin{lemma}\label{lem:farkas}   Assume that the data and priors are consistent (e.g. $\mathcal{C} \not = \emptyset$) and that  enough data has been collected so that $\mathcal{C}$ is compact. Then 
$\mathcal{P}_1 \subseteq \mathcal{P}_2$ if and only if there exists a vector function $\vt{y}(\vt{x})\geq 0, \vt{y}(\vt{x}) \in \R^{2nT+2n}$ such that the following functional set of affine constraints is feasible:
\begin{equation}\label{eq:EFL}
    \vt{y}^T(\vt{x}) \mt{N} = \vt{r}(\vt{x}) \; \text{and} \;
    \vt{y}^T(\vt{x}) \vt{e}  < -\rho(\vt{x})h(\vt{x})
\end{equation}
where
\begin{equation}
    \begin{aligned} \label{eq:Ndef}  & \mt{N} \doteq 
    \begin{bmatrix}
    \mt{A} & \mt{B} & \mathbf{0}\\
    -\mt{A} & -\mt{B} & \mathbf{0} \\
    \mathbf{0} & \mathbf{0} & \mt{W}\\
    \end{bmatrix}, 
    \; \vt{e} \doteq 
    \begin{bmatrix}
        \epsilon \mathbf{1} + \vt{\xi}\\
        \epsilon \mathbf{1} - \vt{\xi}\\
      \vt{d}_\vt{w}
    \end{bmatrix}, \\
    & \vt{r}(\vt{x})\doteq -
    \begin{bmatrix}
        \nabla\cdot(\rho\vt{\phi}^T) & \nabla\cdot(\psi\vt{\gamma}^T) & \nabla\rho
    \end{bmatrix}.
    \end{aligned}
\end{equation}
\end{lemma}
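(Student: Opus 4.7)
The plan is to recognize this as a pointwise application of strong linear-programming duality (a Farkas-type theorem of alternatives). Observe that, in the stacked variable $\vt{z} \doteq [\vt{f}^T \; \vt{g}^T \; \vt{w}^T]^T$, the set $\mathcal{P}_1 = \{\vt{z} \colon \mt{N}\vt{z} \leq \vt{e}\}$ is a fixed polytope (independent of $\vt{x}$), nonempty by the consistency hypothesis and compact by the data-richness hypothesis. For each fixed $\vt{x}$, unpacking the definition of $\vt{r}(\vt{x})$ shows that $\mathcal{P}_2$ is the open half-space $\{\vt{z} \colon \vt{r}(\vt{x}) \vt{z} < -\rho(\vt{x}) h(\vt{x})\}$ in the same coordinates.

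First I would rewrite the containment $\mathcal{P}_1 \subseteq \mathcal{P}_2$ at a fixed $\vt{x}$ as the statement that the linear program
\[ p^*(\vt{x}) \doteq \max_{\vt{z}} \bigl\{\vt{r}(\vt{x}) \vt{z} \colon \mt{N}\vt{z} \leq \vt{e}\bigr\} \]
attains a value strictly less than $-\rho(\vt{x}) h(\vt{x})$. Nonemptiness and compactness of $\mathcal{P}_1$ guarantee that this LP is feasible, has a finite optimum, and that the optimum is attained.

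Next I would invoke strong LP duality to replace the primal with the dual
\[ d^*(\vt{x}) \doteq \min_{\vt{y} \geq \mathbf{0}} \bigl\{\vt{y}^T \vt{e} \colon \vt{y}^T \mt{N} = \vt{r}(\vt{x})\bigr\}. \]
Strong duality gives $p^*(\vt{x}) = d^*(\vt{x})$, hence $p^*(\vt{x}) < -\rho(\vt{x}) h(\vt{x})$ is equivalent to the existence of a dual feasible multiplier $\vt{y}(\vt{x}) \geq \mathbf{0}$ satisfying $\vt{y}^T(\vt{x}) \mt{N} = \vt{r}(\vt{x})$ and $\vt{y}^T(\vt{x}) \vt{e} < -\rho(\vt{x}) h(\vt{x})$, which is exactly \eqref{eq:EFL}. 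One direction (sufficiency) is immediate: any such $\vt{y}(\vt{x})$ gives $\vt{r}(\vt{x}) \vt{z} = \vt{y}^T(\vt{x}) \mt{N} \vt{z} \leq \vt{y}^T(\vt{x}) \vt{e} < -\rho(\vt{x}) h(\vt{x})$ for every $\vt{z} \in \mathcal{P}_1$.

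The point that I expect to require the most care is the step from pointwise existence of a dual certificate to the existence of a vector \emph{function} $\vt{y}(\vt{x})$ of $\vt{x}$. For each $\vt{x}$ the dual-feasible set is a nonempty polyhedron, so a measurable selection (for example, a lexicographically smallest vertex) gives a valid $\vt{y}(\vt{x})$; this selector is not a priori polynomial, but the lemma only asserts existence of some function, and the downstream SOS reformulation will parameterize $\vt{y}(\vt{x})$ as a polynomial of bounded degree to be searched over. Compactness of $\mathcal{P}_1$ is used here in two essential ways: it ensures the primal is bounded (so strong duality applies without a duality gap), and it rules out pathological cases in which $\vt{r}(\vt{x}) \vt{z}$ is unbounded above on $\mathcal{P}_1$ for some $\vt{x}$ and thus no feasible dual exists.
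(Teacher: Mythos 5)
Your proposal is correct and follows essentially the same route as the paper: a pointwise (in $\vt{x}$) application of LP strong duality over the nonempty compact polytope $\mathcal{P}_1$, which is exactly the theorem of strong alternatives the paper cites from \cite{boyd2004convex}. The only cosmetic difference is that you dualize the support-function LP $\max\{\vt{r}(\vt{x})\vt{z} \colon \mt{N}\vt{z}\leq\vt{e}\}$ directly, which sidesteps the paper's auxiliary multiplier $\mu$ and its normalization to $1$ via nonemptiness of $\mathcal{C}$; your closing remark on selecting $\vt{y}(\vt{x})$ as a function of $\vt{x}$ corresponds to the paper's subsequent remark on continuity of the multiplier.
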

\begin{proof} Since $\mathcal{C}$ is compact, from section 5.8.3 in \cite{boyd2004convex} it follows that the systems of inequalities
\beq \label{eq:alternatives}
       \begin{bmatrix} \mt{N} \\ -\vt{r} \end{bmatrix}  \begin{bmatrix}
        \vt{f} \\ \vt{g} \\ \vt{w}
    \end{bmatrix} \leq \begin{bmatrix} \vt{e} \\ \rho h \end{bmatrix} \; \textrm{and} \;\begin{array}{r}
    \vt{y}^T\mt{N}-\mu \vt{r} = 0 \\
     \vt{y}^T\vt{e} + \mu \rho h < 0 \\
    \vt{y} \geq 0, \; \mu \geq 0
    \end{array}
\eeq
are strong alternatives. Further, since $\mathcal{C} \not = \emptyset$ and $\mu >0$, we can take $\mu=1$ without loss of generality. Thus \eqref{eq:EFL} holds if and only if the left set of inequalities in \eqref{eq:alternatives} is infeasible. This implies that if \eqref{eq:EFL} holds,  a triple $(\vt{f,g,w}) \in \mathcal{P}_1$ if and only if 
$\begin{bmatrix}\vt{f}^T\; \vt{g}^T\;\vt{w}^T \end{bmatrix} \vt{r}^T < -\rho h $, that is  $(\vt{f,g,w}) \in \mathcal{P}_2$.
\end{proof}
\begin{remark} Proceeding as in Theorem 2 in \cite{dai2020semi}, it can be shown that  if $\vt{\phi}(\vt{x}), \vt{\gamma}(\vt{x})$ are continuous functions, then $\vt{y}(\vt{x})$ can be chosen to be continuous.
\end{remark}

Combining the observations above leads to
the main result of this paper:
\begin{theorem}\label{thm:main}
A sufficient condition for the existence of a state-feedback control law $u(\vt{x})$ such that all systems in the consistency set $\mathcal{C}$ are rendered robustly safe, is that there exists a continuous vector function $\vt{y}(\vt{x}) \geq 0$ and functions $\rho \in C^1$, $\psi\in C^1$ 
such that
\begin{subequations}\label{eq:thm_main} 
\begin{align}
\vt{y}^T(\vt{x})\mt{N} &= \vt{r}(\vt{x}), \ \forall \vt{x} \in \R^n\label{eq:thm_main1}\\
\vt{y}^T(\vt{x})\vt{e}  & < -\rho(\vt{x})h(\vt{x}), \ \forall \vt{x} \in \R^n \label{eq:thm_main2}\\
|\psi(\vt{x})| &\leq -\rho(\vt{x})h(\vt{x}),\ \forall \vt{x}\in \R^n \label{eq:thm_main3}\\
\rho(\vt{x}) & \geq 0,\ \forall \vt{x}\in\mathcal{X}_0\\ 
\rho(\vt{x}) & < 0, \ \forall \vt{x}\in\mathcal{X}_u.
\end{align}
\end{subequations}
The control law $u$ can then be extracted by the division $u(\vt{x}) = \psi(\vt{x})/\rho(\vt{x})$.
\end{theorem}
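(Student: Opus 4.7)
The plan is to combine Lemma~\ref{lem:lemma2} (the model-based safety certificate) with Lemma~\ref{lem:farkas} (the Farkas-type inclusion $\mathcal{P}_1\subseteq\mathcal{P}_2$), so that the pair $(\rho,\psi)$ together with the continuous dual certificate $\vt{y}(\vt{x})$ plays the role of a single density certificate that simultaneously works for every plant in $\mathcal{C}$ and every disturbance in $\mathcal{W}$. Conditions \eqref{eq:thm_main1}--\eqref{eq:thm_main2} will give the uniform divergence inequality required by Lemma~\ref{lem:lemma2}, the conditions on $\mathcal{X}_0,\mathcal{X}_u$ carry over verbatim, and \eqref{eq:thm_main3} is used only to ensure that the extracted feedback $u=\psi/\rho$ is admissible.

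First I would apply Lemma~\ref{lem:farkas} pointwise in $\vt{x}$: evaluating $\rho,\psi,h$ and $\vt{y}$ at a fixed $\vt{x}$ turns \eqref{eq:thm_main1}--\eqref{eq:thm_main2} into exactly the dual certificate \eqref{eq:EFL}, so under the standing assumptions on $\mathcal{C}$ the lemma yields $\mathcal{P}_1\subseteq\mathcal{P}_2$ at each $\vt{x}\in\R^n$. Unpacking the definition of $\mathcal{P}_2$, this inclusion is precisely the inequality \eqref{eq:allfgw} for every $(\vt{f},\vt{g},\vt{w})\in\mathcal{P}_1$, which after substituting $f=\mt{F}\vt{\phi}$ and $g=\mt{G}\vt{\gamma}$ is the divergence condition \eqref{eq:MBS1} of Lemma~\ref{lem:lemma2} for that specific data-consistent plant and admissible disturbance.

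Next I would check the remaining hypotheses of Lemma~\ref{lem:lemma2} for an arbitrary such plant/disturbance. The sign conditions on $\rho$ over $\mathcal{X}_0$ and $\mathcal{X}_u$ are inherited directly from the statement of the theorem. The subtle point is that Lemma~\ref{lem:lemma2} \emph{assumes} $u=\psi/\rho$ is well defined on $\{\rho\geq 0\}$ and that the closed-loop trajectories exist, neither of which is automatic for a rational feedback; this is the role of \eqref{eq:thm_main3}. The bound $|\psi(\vt{x})|\leq -\rho(\vt{x})h(\vt{x})$ forces $\psi(\vt{x})=0$ wherever $\rho(\vt{x})=0$ and gives the uniform estimate $|u(\vt{x})|\leq -h(\vt{x})$ on $\{\rho>0\}$, so $u$ extends to a bounded feedback across the zero set of $\rho$, and standard ODE arguments yield closed-loop trajectories for every $(\vt{f},\vt{g},\vt{w})\in\mathcal{P}_1$. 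Lemma~\ref{lem:lemma2} then delivers robust safety uniformly in the plant and disturbance, which is the conclusion of the theorem.

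The main obstacle I anticipate is the careful handling of the singular locus $\{\rho=0\}$: since Lemma~\ref{lem:lemma2} only assumes, rather than proves, that the rational feedback $\psi/\rho$ is admissible, the entire burden of justifying this falls on \eqref{eq:thm_main3}. A clean writeup should either argue that the divergence inequality forces $\rho$ to be strictly increasing whenever it vanishes along a trajectory, so that trajectories merely touch $\{\rho=0\}$ transversely, or appeal to the continuity of $\vt{y}(\vt{x})$ noted after Lemma~\ref{lem:farkas} to recover enough regularity of $\psi/\rho$. Everything else in the proof is a direct chaining of the two lemmas.
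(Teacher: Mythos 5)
Your proof follows the same route as the paper's: apply Lemma~\ref{lem:farkas} pointwise in $\vt{x}$ to convert \eqref{eq:thm_main1}--\eqref{eq:thm_main2} into the inclusion $\mathcal{P}_1\subseteq\mathcal{P}_2$, i.e.\ into \eqref{eq:allfgw} for every plant in $\mathcal{C}$ and every $\vt{w}\in\mathcal{W}$, and then invoke Lemma~\ref{lem:lemma2}. Your discussion of how \eqref{eq:thm_main3} underwrites the admissibility of $u=\psi/\rho$ on $\{\rho\geq 0\}$ is actually more careful than the paper, which relegates that point to a remark following the theorem.
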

\begin{proof} The proof follows from the fact that from Lemma \ref{lem:farkas}, \eqref{eq:thm_main1} and \eqref{eq:thm_main2} guarantee that \eqref{eq:allfgw} holds for all plants in $\mathcal{C}$ and all admissible disturbances $\vt{w} \in \mathcal{W}$. Hence the conditions in Lemma \ref{lem:lemma2} hold for all plants that could have generated the observed data.
\end{proof}
    
\begin{remark}
Note that \eqref{eq:thm_main3} is a convex tightening of the condition that $\psi=0$ when $\rho=0$ 
in the safe region  $\rho(\vt{x}) \geq 0 $.
\end{remark}

\subsection{Sum-of-Squares Safety Program}
\label{sec:sos_safety}
In order to solve the infinite-dimensional Problem \eqref{eq:thm_main} in a tractable manner, we restrict the variables $\rho, \psi, \vt{y}$ to be polynomials. Under this polynomial restriction, the extracted controller $u(\vt{x}) = \psi(\vt{x})/\rho(\vt{x})$ is then a rational function.



Let $\mathcal{X}_0 \doteq \{\vt{x}\colon k(\vt{x}) \geq 0\}$ and $\mathcal{X}_u \doteq \{\vt{x}\colon h(\vt{x}) \geq 0\}$ denote the initial condition and unsafe sets, respectively. Algorithm 1 is \iac{SOS}-based finite-degree tightening of \eqref{eq:thm_main} for robustly safe control. Successful execution of algorithm \ref{alg:1} is sufficient for finding a robustly safe control law. 
\begin{algorithm}[h]
    \caption{Data-Driven Safe Control Design} \label{alg:1}
    \begin{algorithmic}
        \State Input: sample data $\dc$, and degrees $d_f,d_g,d_\rho,d_\psi$
        \State Let $2d_1\geq\text{max}\left\{d_f+d_\rho,d_g+d_\psi \right\}, 2d_2 \geq \text{max}\left\{d_\rho,d_\psi \right\}$
        \State Solve: the feasibility problem
        \[ \begin{array}{rcl}
        \textrm{coeff}_\vt{x}(\vt{y}^T \mt{N} - \vt{r})&=0                              \hfill(A.1) \\
        -\rho h - \vt{y}^T \vt{e} - c_1\; &\in \Sigma_{d_1}[\vt{x}] \hspace{1cm}   \hfill(A.2)\\
        -\rho h - \psi   \; &\in \Sigma_{d_2}[\vt{x}]   \hfill(A.3)\\
        -\rho h + \psi   \; &\in \Sigma_{d_2}[\vt{x}]   \hfill(A.4)\\
        \rho - s_1k  \; &\in \Sigma_{d_2} [\vt{x}]  \hfill(A.5)\\
        -\rho - s_2h - c_2\; &\in \Sigma_{d_2} [\vt{x}]  \hfill(A.6)\\
        \vt{y}_i            \; &\in \Sigma_{d_1}[\vt{x}]    \hfill(A.7) \\
        s_1, s_2       \; &\in \Sigma_{d_2}[\vt{x}]    \hfill(A.8) \\
        c_1, c_2       \; &> 0    \hfill(A.9)\\
        \end{array} \]
        \State Output: the safe control law $u = \psi/\rho$ or a certificate of infeasibility at degree $(d_1, d_2)$ 
    \end{algorithmic}
\end{algorithm}


\subsection{Computational Complexity Analysis}
\label{sec:complexity}

A straightforward application of Putinar's Positivstellensatz to solve \eqref{eq:allfgw} requires considering polynomials in the indeterminates $(\vt{x},\vt{f},\vt{g},\vt{w})$ with a total dimension $d_p=d_f+d_g+2n$. Thus, for an \ac{SOS} relaxation of order $d_r$, the total number of variables (hence the maximal size of Gram matrices) in the optimization is $\binom{d_r+d_p}{d_r}$. In contrast, by exploiting duality, Algorithm \ref{alg:1} only requires Gram matrices of maximal size $\binom{2+d_r}{d_r}$.

As an example, for a second order system with polynomial dynamics of degree 2, we have $d_f=d_g=6$. If $\rho$ and $\psi$ are also limited to degree 2 polynomials, for a relaxation of order $d_r=3$, the maximal Gram matrix size drops from $\binom{19}{3} = 969$ to $\binom{5}{3} = 10$.
\section{Numerical Examples} \label{sec:examples}

The proposed algorithm is tested on a pair of examples. Both experiments are implemented in MATLAB 2020b with Yalmip \cite{lofberg2004yalmip} and solved by Mosek \cite{mosek92}. Code to generate experiments and plots is publicly available at \url{https://github.com/J-mzz/ddc-safety}. 

\begin{example} Consider the Flow system \cite{rantzer2004analysis} with
\begin{align}
\label{eq:expr_1_dyn}
    f &= \begin{bmatrix}
    x_2 \\ -x_1 + \frac{1}{3} x_1^3 - x_2
    \end{bmatrix}, &
    g &= \begin{bmatrix}
    0 \\ 1
    \end{bmatrix}.
\end{align}

The initial and unsafe sets are the (unions of) disks:
$$
\begin{aligned}
    \mathcal{X}_0 = \{\vt{x} \mid \  &0.25-x_1^2-(x_2+3)^2 \geq 0\},\\
    \mathcal{X}_u = \{\vt{x} \mid \ &h_1(\vt{x})=0.16-(x_1+1)^2-(x_2+1)^2\geq 0 \ , \\
    \textrm{OR} \ &h_2(\vt{x})=0.16-(x_1+1)^2-(x_2-1)^2\geq 0\}.
\end{aligned}
$$





\begin{figure}
     \centering
          \begin{subfigure}[b]{0.49\columnwidth}
         \centering
         \includegraphics[width=\linewidth]{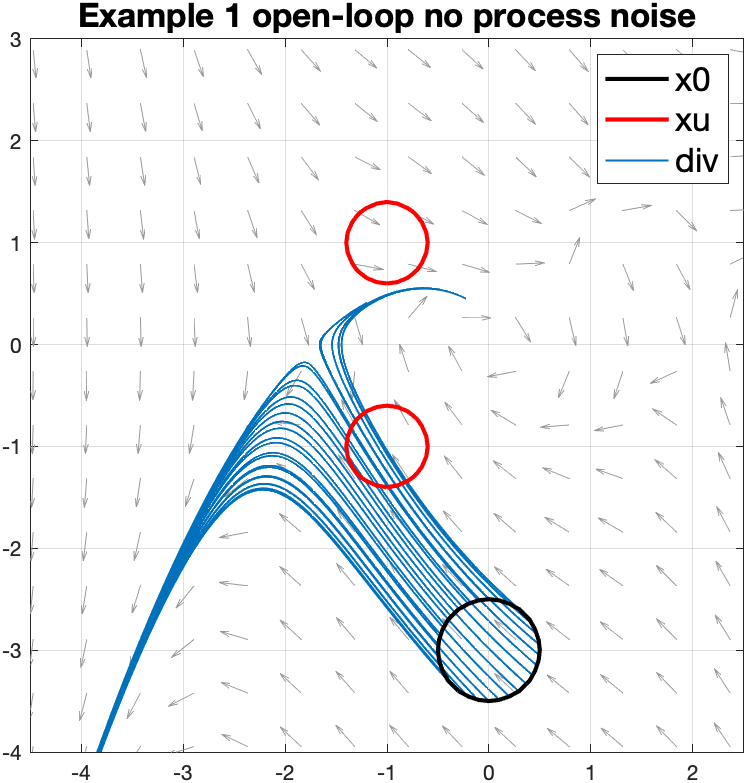}
         \caption{open-loop}
         \label{fig:ex1_open}
     \end{subfigure}
     \hfill
     \begin{subfigure}[b]{0.49\columnwidth}
         \centering
         \includegraphics[width=\linewidth]{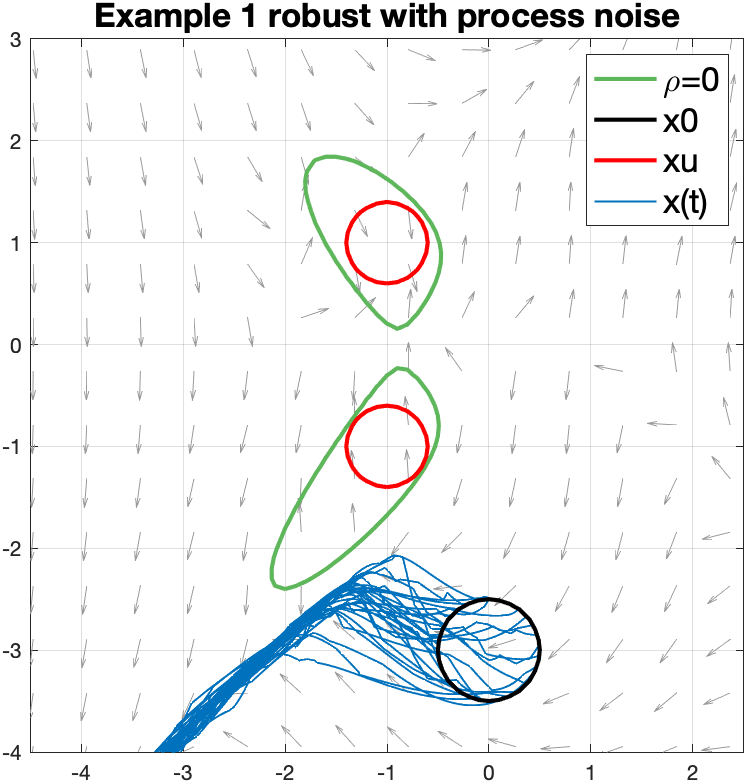}
         \caption{robust closed-loop}
         \label{fig:ex1_robust}
     \end{subfigure}
        \caption{Flow \eqref{eq:expr_1_dyn} simulations for Example 1}
        \label{fig:sim_ex1}
        \vspace{-1.5em}
\end{figure}

\begin{figure}
     \centering
     \begin{subfigure}[b]{0.49\columnwidth}
         \centering
         \includegraphics[width=\linewidth]{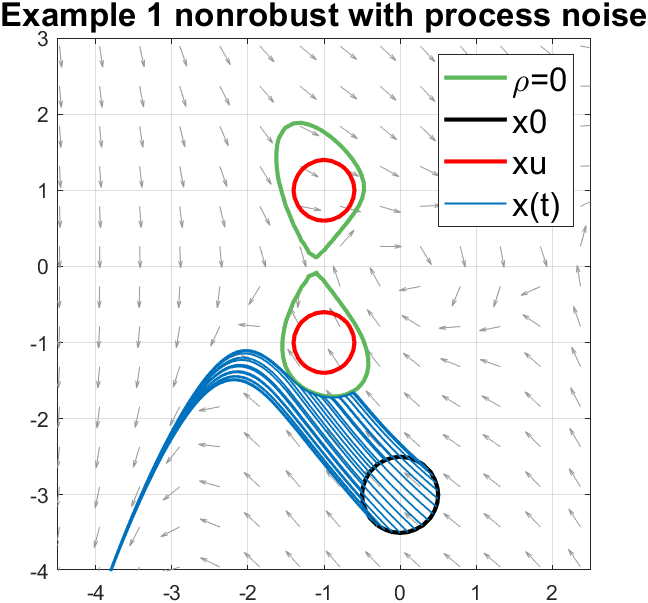}
         \caption{no process noise}
         \label{fig:ex1_nonrobust_no_noise}
     \end{subfigure}
     \hfill
     \begin{subfigure}[b]{0.49\columnwidth}
         \centering
         \includegraphics[width=\linewidth]{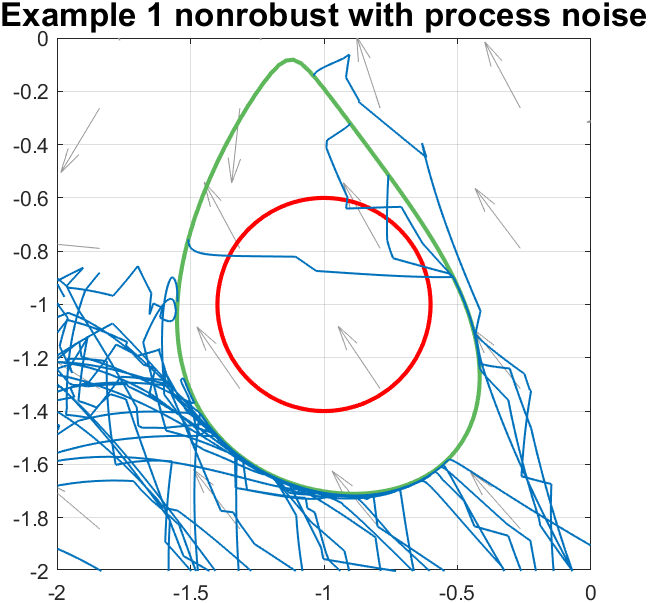}
         \caption{with process noise}
         \label{fig:ex1_nonrobust}
     \end{subfigure}
        \caption{Safe controllers synthesized without process noise may be unsafe when process noise is applied}
        \label{fig:sim_ex1_2}
        \vspace{-1.5em}
\end{figure}


Results of the control design for Example 1 are shown in Fig. \ref{fig:sim_ex1} and \ref{fig:sim_ex1_2}. In each figure, 30 trajectories (blue curves) start from within the initial set $\mathcal{X}_0$ (black circle). The unsafe set $\mathcal{X}_u$ is the pair of red disks, implemented as $h(\vt{x}) = -h_1(\vt{x})h_2(\vt{x}) \geq 0$. Some of the open-loop trajectories in Fig. \ref{fig:ex1_open} enter the unsafe set $\mathcal{X}_u$ when starting in $\mathcal{X}_0$.

The prior knowledge of the system model is that $f$ is a two-dimensional cubic polynomial vector with $f(\mathbf{0})=\mathbf{0}$ and that $g$ is a two-dimensional constant vector,
where the cubic polynomials in $f$ and the constant terms in $g$ are both unknown. 80 datapoints were collected and used to design a robustly safe controller under a sampling noise and a process noise bound of $\epsilon=\epsilon_\vt{w}=2$, yielding a polytope $\mathcal{P}_2$ from \eqref{eq:P1} with 22 dimensions ($\text{dim}(\vt{f})=18, \ \text{dim}(\vt{g})=2, \ \text{dim}(\vt{w})=2$) and 324 faces (91 of the faces $\mathcal{P}_2$ are nonredundant \cite{caron1989degenerate}). Algorithm \ref{alg:1} was used to find $\rho,\psi \in \R[\vt{x}]_{\leq 4}$, yielding 99 Gram matrices of maximal size $\binom{6}{4} = 15$ and the rational control law $u=\psi/\rho$. Fig. \ref{fig:ex1_robust} plots trajectories associated with this safe control law, and also features the $\rho=0$ level set in green.

Fig. \ref{fig:sim_ex1_2} highlights the importance of robustness in execution as well as in data-collection. The controller in Fig. \ref{fig:sim_ex1_2} was computed with the same noisy observed data as in Fig. \ref{fig:sim_ex1} but with $\epsilon_\vt{w} = 0$. The left plot in Fig. \ref{fig:ex1_nonrobust_no_noise} shows that the control is safe under noiseless trajectory execution. The right plot is zoomed into the lower red disk, and  demonstrates that some controlled trajectories pass through the $\rho=0$ contour and  enter $\mathcal{X}_u$ when process noise $\norm{\vt{w}}_\infty\leq 2$ is applied in execution (trajectories are terminated when $u\geq 10^4$, which is caused by numerical issues near the $\rho=0$ contour).

To summarize this example, $\rho\geq 0$ is an invariant set for all consistent systems under online noise when the robust controller is applied. The level set $\rho=0$ will separates initial set $\mathcal{X}_0$ and the unsafe set $\mathcal{X}_u$. Uncontrolled (Fig. \ref{fig:ex1_open}) and  non-robustly-safe (Fig. \ref{fig:ex1_nonrobust}) trajectories may enter $\mathcal{X}_u$.
\end{example}

\begin{example} Consider the Twist system \cite{miller2022bounding} with:

\begin{align}
\label{eq:expr_2_dyn}
    f&= \begin{bmatrix}-2.5x_1 + x_2 - 0.5x_3 + 2x_1^3+2x_3^3 \\
    -x_1+1.5x_2+0.5x_3-2x_2^3-2x_3^3 \\
    1.5 x_1 + 2.5x_2 - 2 x_3 - 2x_1^3 - 2 x_2^3\end{bmatrix}, & g&= \begin{bmatrix} 0 \\ 0\\  1 \end{bmatrix}.
\end{align}

The initial and unsafe sets are the spheres:
$$
\begin{aligned}
    \mathcal{X}_0 & = \{\vt{x} \mid 0.01-(x_1+0.5)^2-x_2^2-x_3^2 \geq 0\},\\
    \mathcal{X}_u & = \{\vt{x} \mid 0.01-(x_1+0.1)^2-x_2^2-x_3^2\geq 0\}.
\end{aligned}
$$

Results of Example 2 are are shown in Fig. \ref{fig:sim_ex2}. Trajectories start within the initial set $\mathcal{X}_0$ (black sphere), and some of the open-loop trajectories in Fig. \ref{fig:ex2_wo_u} will enter the unsafe set $\mathcal{X}_u$ (red sphere). The prior knowledge of the system model is that $f$ is a three-dimensional cubic polynomial vector with $f(\mathbf{0})=\mathbf{0}$ and that $g$ is a three-dimensional constant vector. 80 datapoints were collected and used to design a robust safe controller under a sampling noise and a process noise bound of $\epsilon=\epsilon_\vt{w}=1$, yielding a polytope $\mathcal{P}_2$ with 63 dimensions ($\text{dim}(\vt{f})=38, \ \text{dim}(\vt{g})=3, \ \text{dim}(\vt{w})=3$) and 304 faces (all of them are nonredundant). Using Algorithm \ref{alg:1} to find $\rho,\psi \in \R[\vt{x}]_{\leq 4}$ yields a rational control law $u=\psi/\rho$. Fig. \ref{fig:ex2_w_u} features the $\rho=0$ level set surface in green.

\begin{figure}
     \centering
     \begin{subfigure}[b]{0.49\columnwidth}
         \centering
         \includegraphics[width=\linewidth]{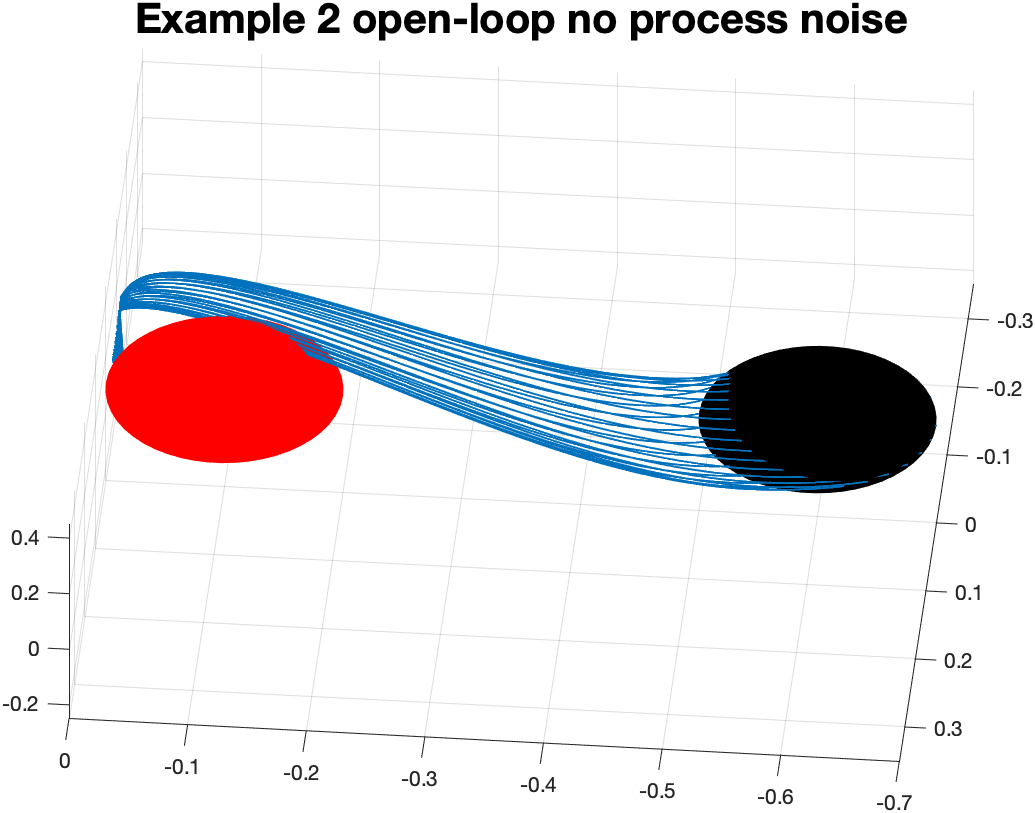}
         \caption{open-loop}
         \label{fig:ex2_wo_u}
     \end{subfigure}
     \hfill
     \begin{subfigure}[b]{0.49\columnwidth}
         \centering
         \includegraphics[width=\linewidth]{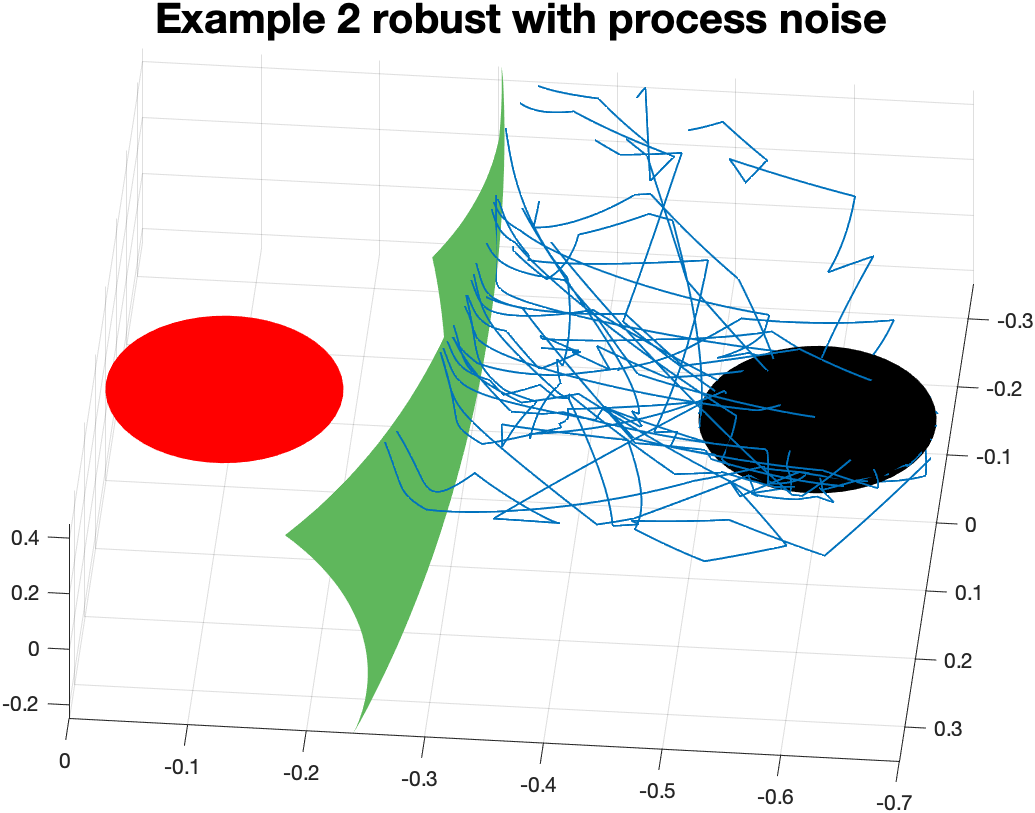}
         \caption{robust closed-loop}
         \label{fig:ex2_w_u}
     \end{subfigure}
        \caption{Twist \eqref{eq:expr_2_dyn} simulations for Example 2.}
        \label{fig:sim_ex2}
        \vspace{-1.5em}
\end{figure}
\end{example}

\section{Conclusion} \label{sec:conclusion}

This paper uses density functions to find provably safe controllers for systems whose data-observations and executions are both corrupted by $L_\infty$-bounded noise. The output of Algorithm \ref{alg:1} (if successful) is a rational controller $u$, along with a density certificate $\rho$ that guarantees robust safety of all trajectories starting in the initial set. Future work involves steering safe trajectories to a destination set (e.g. reach avoid, asymptotic stability), adding performance objectives, and extension to other noise and disturbance models (e.g. $L_2$ or semidefinite bounded signals).




\bibliographystyle{IEEEtran}
\bibliography{references.bib}

\end{document}